\theoremstyle{plain} 
\newtheorem{theorem}{Theorem}
\theoremstyle{definition} 
\newtheorem{definition}[theorem]{Definition}
\newtheorem{example}[theorem]{Example}
\newcommand{\R}{\ensuremath{\mathbb{R}}}
\newcommand{\Z}{\ensuremath{\mathbb{Z}}}
\newcommand{\T}{\ensuremath{\mathbb{T}}}
\newcommand{\N}{\ensuremath{\mathbb{N}}}
\numberwithin{equation}{section}
\numberwithin{theorem}{section}
\begin{document}

\title[Hyers--Ulam Stability on Time Scales]{Hyers--Ulam Stability for Discrete Time Scale with Two Step Sizes}
\author[Anderson]{Douglas R. Anderson} 
\address{Department of Mathematics \\
         Concordia College \\
         Moorhead, MN 56562 USA}
\email{andersod@cord.edu}

\keywords{stability, first order, Hyers--Ulam, time scales, exponential function with alternating sign.}
\subjclass[2010]{34N05, 34A30, 34A05, 34D20}

\begin{abstract} 
We clarify the Hyers--Ulam stability (HUS) of certain first-order linear constant coefficient dynamic equations on time scales, in the case of a specific time scale with two alternating step sizes, where the exponential function changes sign.
\end{abstract}

\maketitle\thispagestyle{empty}


\section{Hyers--Ulam Stability on a Specific Time Scale}


\begin{definition}
Let $\T$ be a time scale and $\lambda\in\R$ be a constant. We say that the eigenvalue equation
\begin{equation}\label{maineq}
 x^\Delta(t) = \lambda x(t), \quad \lambda\in\R, \quad t\in\T
\end{equation}
has Hyers--Ulam stability (HUS) on $\T$ iff there exists a constant $K>0$ with the following property. For arbitrary $\varepsilon>0$, if a function $\phi:\T\rightarrow\R$ satisfies $|\phi^\Delta(t)-\lambda\phi(t)|\le\varepsilon$ for all $t\in\T^\kappa$, then there exists a solution $x:\T\rightarrow\R$ of \eqref{maineq} such that $|\phi(t)-x(t)|\le K\varepsilon$ for all $t\in\T$. Such a constant $K$ is called an HUS constant for \eqref{maineq} on $\T$.
\end{definition}

\begin{definition}
Eigenvalue equation \eqref{maineq} is regressive iff $1+\lambda\mu(t)\ne 0$ for all $t\in\T^{\kappa}$, and \eqref{maineq} is positively regressive iff $1+\lambda\mu(t) > 0$ for all $t\in\T^{\kappa}$.
\end{definition}

One question that arises following the above definitions is, if \eqref{maineq} has HUS, what is the minimum HUS constant?
When $\T=\R$, Onitsuka and Shoji \cite{onitsuka2} show that the minimum HUS constant for \eqref{maineq} is $1/\lambda$ if $\lambda>0$ and $1/|\lambda|$ if $\lambda<0$. When $\T=h\Z$, Onitsuka \cite{onitsuka1} shows that the minimum HUS constant for \eqref{maineq} is $1/\lambda$ if $\lambda>0$; $1/|\lambda|$ if $-1/h<\lambda<0$; $1/(\lambda+2/h)$ if $-2/h<\lambda<-1/h$; and $1/|\lambda+2/h|$ if $\lambda<-2/h$. Note that there is no HUS for \eqref{maineq} for $\lambda=0$ on arbitrary $\T$, or for $\lambda=-2/h$ when $\T=h\Z$. 

It is striking to see this more complicated list in the uniformly discrete case $\T=h\Z$, particularly for negative eigenvalues $\lambda$. Due to this observation, it is surmised that finding minimum HUS constants on arbitrary time scales for all values of $\lambda\in\R$, particularly for all $\lambda<0$ when \eqref{maineq} is not positively regressive, may prove to be a prohibitive task. Indeed, on general time scales the current situation is as follows. Andr\'{a}s and M\'{e}sz\'{a}ros \cite[Theorem 2.5]{andras} highlight three cases when considering HUS for \eqref{maineq}, namely
\begin{enumerate}
 \item[S1] $|e_\lambda(t,t_0)|$ and $\int_{t_0}^{t}|e_\lambda(t,\sigma(s))|\Delta s$ are bounded on $[t_0,\infty)_\T$; 
 \item[S2] $\lim_{t\rightarrow\infty}|e_\lambda(t,t_0)|=\infty$ and $\int_{t}^{\infty}|e_{\lambda}(t_0,\sigma(s))|\Delta s<\infty$ for all $t\in[t_0,\infty)_{\T}$;
 \item[S3] $|e_\lambda(t,t_0)|$ is bounded on $[t_0,\infty)_{\T}$ and $\lim_{t\rightarrow\infty}\int_{t_0}^{t}|e_\lambda(s,t_0)|\Delta s=\infty$.
\end{enumerate}
In the case of S1, they prove that \eqref{maineq} has HUS with HUS constant 
\begin{equation}\label{andrasK}
 K=\sup|e_\lambda(t,t_0)|+\sup \int_{t_0}^{t}|e_\lambda(t,\sigma(s))|\Delta s, 
\end{equation}
and in the case of S2 with $\lambda\ne 0$, they prove that \eqref{maineq} has HUS with HUS constant $K=1/|\lambda|$, but they were not necessarily seeking to prove these HUS constants $K$ were minimal. That paper \cite{andras} introduced case S3 but left its impact on questions of HUS unaddressed. Recently, Anderson and Onitsuka \cite{andon} used the methods of \cite{andras, onitsuka1, onitsuka2} to find the following: If $\lambda\ne 0$ and $1+\lambda\mu(t)>0$ for all $t\in\T^{\kappa}$, then \eqref{maineq} has HUS with minimum HUS constant $K=1/|\lambda|$. Also, partially addressing the case of S3 above, they found that if there exist constants $0<m<M$ such that
\begin{equation}\label{e-bdd}
 0 < m \le |e_{\lambda}(t,t_0)| \le M, \qquad \forall\;t\in\T,
\end{equation}
then \eqref{maineq} does not have HUS. It remains open as to what happens when $1+\lambda\mu(t)<0$, and whether the HUS constant $K$ given in \eqref{andrasK} in the case of S1 is the minimum HUS constant. Shen \cite{shen} partly deals with the situation when $1+\lambda\mu(t)<0$, but only for finite intervals. 

To further explore the case of S1 above and eigenvalues $\lambda$ such that $1+\lambda\mu(t)<0$, in this work we consider a discrete time scale with alternating graininess function. In particular, for the two step sizes $\alpha,\beta>0$ with $\alpha\ne\beta$, let
\[ \T:=\ensuremath{\mathbb{T}}_{\alpha,\beta}=\{0, \alpha, (\alpha+\beta), (\alpha+\beta)+\alpha, 2(\alpha+\beta), 2(\alpha+\beta)+\alpha, 3(\alpha+\beta),\cdots\}. \]
Then for $t\in\T$ and $k\in\N_0=\{0,1,2,3,4,\cdots\}$ we have 
\[ \mu(t)=\begin{cases} \alpha:& t=k(\alpha+\beta) \\ \beta:& t=k(\alpha+\beta)+\alpha, \end{cases} \]
and for $\lambda\in\R\backslash\{-1/\alpha,-1/\beta\}$ the time scales exponential function $e_{\lambda}(t,0)$ is given by
\begin{equation}\label{pabexp0} 
 e_{\lambda}(t,0) = \begin{cases} \left[(1+\lambda\alpha)(1+\lambda\beta)\right]^{\frac{t}{\alpha+\beta}}, & t=k(\alpha+\beta), \\ 
 \left[(1+\lambda\alpha)(1+\lambda\beta)\right]^{\frac{t-\alpha}{\alpha+\beta}}(1+\lambda\alpha), &t=k(\alpha+\beta)+\alpha. \end{cases}
\end{equation}
By exponential function we mean the unique solution to \eqref{maineq} satisfying initial condition $x(0)=1$.

Our method will be as follows. We will apply the techniques developed for $\T=\R$ in \cite{onitsuka2} and $\T=h\Z$ in \cite{onitsuka1} to $\T=\T_{\alpha,\beta}$ defined above, and then compare the HUS constants thus derived with that given by \cite{andras} in \eqref{andrasK}. Note that in the following theorem only case I identifies a minimum HUS constant. In cases G and H, however, if we let the two step sizes satisfy $\alpha=\beta=h$, then noting that $\lambda<0$ the constant given in the theorem reduces to
\[ \frac{\max\left\{\frac{1}{\beta}-\frac{1}{\alpha}-\lambda,\; \frac{1}{\alpha}-\frac{1}{\beta}-\lambda\right\}}{|\lambda|\left|\lambda+\frac{1}{\alpha}+\frac{1}{\beta}\right|} = \frac{-\lambda}{|\lambda|\left|\lambda+\frac{2}{h}\right|} = \frac{1}{\left|\lambda+2/h\right|}, \]
which is indeed the minimum constant for $\T=h\Z$ from \cite{onitsuka1}, as mentioned earlier.


\begin{theorem}\label{mainthm}
Consider \eqref{maineq} for $\lambda\in\R$. If $\alpha^2+\beta^2-6\alpha\beta\ge 0$, set
\[ \lambda^+:=\frac{-\alpha-\beta+\sqrt{\alpha^2+\beta^2-6\alpha\beta}}{2\alpha\beta}, \quad \lambda^-:=\frac{-\alpha-\beta-\sqrt{\alpha^2+\beta^2-6\alpha\beta}}{2\alpha\beta}. \]
Let $\varepsilon>0$ be given. Then we have the following cases.
\begin{enumerate}
\item[A.] Let $\alpha\ge(3+2\sqrt{2})\beta$, and let $\frac{-1}{\beta}<\lambda<\lambda^-$ or $\lambda^+<\lambda<\frac{-1}{\alpha}$. Then \eqref{maineq} has HUS with an HUS constant $\frac{|\lambda+\frac{1}{\alpha}-\frac{1}{\beta}|}{(\lambda-\lambda^+)(\lambda-\lambda^-)}$.
\item[B.] Let $\alpha > (3+2\sqrt{2})\beta$, and let $\lambda^-<\lambda<\lambda^+$. Then \eqref{maineq} has HUS with an HUS constant $\left|\frac{\lambda+\frac{1}{\alpha}-\frac{1}{\beta}}{(\lambda-\lambda^+)(\lambda-\lambda^-)}\right|$.
\item[C.] Let $\beta < \alpha < (3+2\sqrt{2})\beta$, and let $\frac{-1}{\beta}<\lambda<\frac{-1}{\alpha}$. Then \eqref{maineq} has HUS with an HUS constant $\left|\frac{\lambda+\frac{1}{\alpha}-\frac{1}{\beta}}{\frac{2+\alpha\lambda+\beta\lambda+\alpha\beta\lambda^2}{\alpha\beta}}\right|$.
\item[D.] Let $0 < \alpha \le (3-2\sqrt{2})\beta$, and let $\frac{-1}{\alpha}<\lambda<\lambda^-$ or $\lambda^+<\lambda<\frac{-1}{\beta}$. Then \eqref{maineq} has HUS with an HUS constant $\frac{\frac{1}{\alpha}-\frac{1}{\beta}-\lambda}{(\lambda-\lambda^+)(\lambda-\lambda^-)}$.
\item[E.] Let $0 < \alpha < (3-2\sqrt{2})\beta$, and let $\lambda^-<\lambda<\lambda^+$. Then \eqref{maineq} has HUS with an HUS constant $\frac{\frac{1}{\alpha}-\frac{1}{\beta}-\lambda}{|(\lambda-\lambda^+)(\lambda-\lambda^-)|}$.
\item[F.] Let $(3-2\sqrt{2})\beta < \alpha < \beta$, and let $\frac{-1}{\alpha}<\lambda<\frac{-1}{\beta}$. Then \eqref{maineq} has HUS with an HUS constant $\left|\frac{\frac{1}{\alpha}-\frac{1}{\beta}-\lambda}{\frac{2+\alpha\lambda+\beta\lambda+\alpha\beta\lambda^2}{\alpha\beta}}\right|$.
\item[G.] Let $\frac{-1}{\alpha}-\frac{1}{\beta} < \lambda < \min\left\{\frac{-1}{\alpha},\frac{-1}{\beta}\right\}$. Then \eqref{maineq} has HUS with an HUS constant $\frac{\max\left\{\frac{1}{\beta}-\frac{1}{\alpha}-\lambda,\; \frac{1}{\alpha}-\frac{1}{\beta}-\lambda\right\}}{|\lambda|\left(\lambda+\frac{1}{\alpha}+\frac{1}{\beta}\right)}$.
\item[H.] Let $\lambda<\frac{-1}{\alpha}-\frac{1}{\beta}$. Then \eqref{maineq} has HUS with an HUS constant $\frac{\max\left\{\frac{1}{\beta}-\frac{1}{\alpha}-\lambda,\; \frac{1}{\alpha}-\frac{1}{\beta}-\lambda\right\}}{|\lambda|\left|\lambda+\frac{1}{\alpha}+\frac{1}{\beta}\right|}$.
\item[I.] If $\max\left\{\frac{-1}{\alpha},\frac{-1}{\beta}\right\}<\lambda<0$ or $\lambda>0$, then \eqref{maineq} has HUS with minimum HUS constant $1/|\lambda|$.
\item[J.] If $\lambda=0$, $\lambda=\lambda^+$, $\lambda=\lambda^-$, or $\lambda=\frac{-1}{\alpha}-\frac{1}{\beta}$, then \eqref{maineq} does not have HUS.
\item[K.] If $\lambda=\frac{-1}{\alpha}$ or $\lambda=-\frac{1}{\beta}$, then \eqref{maineq} does not exist as a first-order dynamic equation.
\end{enumerate}
\end{theorem}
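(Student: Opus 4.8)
The plan is to reduce every case to the behaviour of the single two-step multiplier
\[ p:=(1+\lambda\alpha)(1+\lambda\beta), \]
since by \eqref{pabexp0} the exponential satisfies $e_\lambda(k(\alpha+\beta),0)=p^k$ and $e_\lambda(k(\alpha+\beta)+\alpha,0)=p^k(1+\lambda\alpha)$, so its long-term size is governed entirely by $|p|$. First I would record the two algebraic identities
\[ p+1=\alpha\beta(\lambda-\lambda^+)(\lambda-\lambda^-),\qquad p-1=\alpha\beta\,\lambda\Bigl(\lambda+\tfrac1\alpha+\tfrac1\beta\Bigr), \]
the first because $\lambda^\pm$ are exactly the roots of $\alpha\beta\lambda^2+(\alpha+\beta)\lambda+2=0$, i.e.\ of $p=-1$, and the second because $\lambda=0$ and $\lambda=-\tfrac1\alpha-\tfrac1\beta$ are the roots of $p=1$. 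These identities do three things at once: they show the thresholds $(3\pm2\sqrt2)\beta$ in the hypotheses are precisely where the discriminant $\alpha^2+\beta^2-6\alpha\beta$ of $p=-1$ changes sign (divide by $\beta^2$ and solve $r^2-6r+1=0$), they pin down the four exceptional values of case J as exactly the points where $|p|=1$, and they rewrite the denominators of cases A--H as $1-|p|$ (when $|p|<1$) or $|p|-1$ (when $|p|>1$).

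Next I would partition $\R$ for $\lambda$ by the signs of $1+\lambda\alpha$, $1+\lambda\beta$ and the size of $|p|$ relative to $1$, matching each region to a case: both factors positive (positive regressivity, $\lambda>\max\{-\tfrac1\alpha,-\tfrac1\beta\}$ or $\lambda>0$) is case I; $|p|=1$ is case J; a vanishing factor is case K; $|p|<1$ gives the decaying cases A, C, D, F and part of G; and $|p|>1$ gives the growing cases B, E, H and the rest of G/H. The three easy cases are then dispatched directly: case I by the Anderson--Onitsuka result quoted in the excerpt (positive regressivity yields the minimum constant $1/|\lambda|$); case K because $1+\lambda\mu(t)=0$ at the relevant grid points, so regressivity and hence \eqref{pabexp0} fail and \eqref{maineq} is not a regressive first-order equation; and case J by observing that $|p|=1$ forces $|e_\lambda(t,0)|\in\{1,|1+\lambda\alpha|\}$, which is pinched between two positive constants, whence \eqref{e-bdd} of \cite{andon} gives no HUS (with $\lambda=0$ handled separately since $e_0\equiv1$).

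For the HUS cases A--H I would use variation of constants. Writing $\psi:=\phi^\Delta-\lambda\phi$ with $|\psi|\le\varepsilon$, every solution $x$ of \eqref{maineq} with $x(0)=c$ satisfies
\[ \phi(t)-x(t)=e_\lambda(t,0)\bigl(\phi(0)-c\bigr)+\int_0^t e_\lambda(t,\sigma(s))\psi(s)\,\Delta s. \]
In the decaying subcases ($|p|<1$, so $e_\lambda\to0$) I take $c=\phi(0)$ to annihilate the homogeneous term, giving $|\phi-x|\le\varepsilon\,\sup_t\int_0^t|e_\lambda(t,\sigma(s))|\,\Delta s$; in the growing subcases ($|p|>1$) I take the stable-manifold value $c=\phi(0)+\int_0^\infty e_\lambda(0,\sigma(s))\psi(s)\,\Delta s$, which converges because $|e_\lambda(\sigma(s),0)|\to\infty$ geometrically, yielding $\phi-x=-\int_t^\infty e_\lambda(t,\sigma(s))\psi(s)\,\Delta s$ and $|\phi-x|\le\varepsilon\,\sup_t\int_t^\infty|e_\lambda(t,\sigma(s))|\,\Delta s$. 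Both suprema I would evaluate by splitting the $\Delta$-integral over even and odd grid points and summing the geometric series in $|p|$; in each orientation they collapse to
\[ \frac{\max\bigl\{\alpha\,|1+\lambda\beta|+\beta,\ \alpha+\beta\,|1+\lambda\alpha|\bigr\}}{\bigl|\,1-|p|\,\bigr|}. \]
Using the identities above on the denominator, and resolving the maximum by the sign of $1+\lambda\alpha$ (whose value determines which grid parity dominates), this reduces exactly to the constant claimed in the corresponding case; the $\alpha=\beta=h$ reduction to $1/|\lambda+2/h|$ noted before the theorem is a consistency check on cases G and H. Since minimality is not asserted for A--H, the upper bound from this construction suffices, and it already improves on \eqref{andrasK} by omitting the $\sup|e_\lambda|$ summand.

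The \emph{main obstacle} is the bookkeeping rather than any single deep estimate: correctly carving $\R$ into the sign regions of $1+\lambda\alpha$, $1+\lambda\beta$ and $|p|$, verifying that the boundaries land precisely on the stated thresholds and exceptional values, and then carrying out the even/odd geometric-series evaluation of the forward and backward $\Delta$-integrals and simplifying the resulting maximum into each closed form. The genuinely delicate point is the sign change of $e_\lambda$ when $p<0$ (the oscillation advertised in the abstract): one must confirm that taking absolute values inside the integral, together with the parity-dependent dominant term, still produces the compact constants of cases A--C and D--F without spurious inflation, and that the stable-manifold choice of $c$ remains well defined throughout the growing cases.
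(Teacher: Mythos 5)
Your proposal is correct and, carried out, would deliver exactly the constants stated in cases A--H; but it takes a genuinely different route from the paper. The paper adapts the Onitsuka method of \cite{onitsuka1, onitsuka2}: in each regime it builds a sign-alternating correction function ($E_1$, $E_2$, or $E_3$), forms monotone auxiliary functions $u,v$ with $\phi=e_\lambda u\mp\varepsilon E_i/(\cdots)$, proves $u$ is non-decreasing and $v$ non-increasing, sandwiches the initial value of the approximating solution between $v(0)$ and $u(0)$, and then estimates $\phi(t)-x(t)$ parity by parity; in the growing cases B, E, H it chooses $x(0)=\lim_{t\to\infty}\phi(t)/e_\lambda(t,0)$, which coincides with your stable-manifold value since $\phi(t)/e_\lambda(t,0)=\phi(0)+\int_0^t e_\lambda(0,\sigma(s))\psi(s)\,\Delta s$ for your $\psi:=\phi^\Delta-\lambda\phi$. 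You instead estimate $\sup_t\int_0^t|e_\lambda(t,\sigma(s))|\,\Delta s$ (decaying) and $\sup_t\int_t^\infty|e_\lambda(t,\sigma(s))|\,\Delta s$ (growing) directly. I checked the key computations: with $p=(1+\lambda\alpha)(1+\lambda\beta)$, the parity-split geometric series give $\bigl(\alpha|1+\lambda\beta|+\beta\bigr)/\bigl|1-|p|\bigr|$ and $\bigl(\alpha+\beta|1+\lambda\alpha|\bigr)/\bigl|1-|p|\bigr|$ in both orientations; your identities $p+1=\alpha\beta(\lambda-\lambda^+)(\lambda-\lambda^-)$ and $p-1=\alpha\beta\lambda\bigl(\lambda+\tfrac1\alpha+\tfrac1\beta\bigr)$ are correct; and the maximum resolves as you claim (in A--C the difference of the two numerators is $2\beta(1+\alpha\lambda)<0$, in D--F it is $-2\alpha(1+\beta\lambda)>0$), so each closed-form constant in A--H is reproduced exactly; cases I, J, K you dispatch exactly as the paper does, via \cite{andon} and the regressivity observation. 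As to what each approach buys: yours is a single uniform computation covering all eight HUS cases, stays within the Andr\'as--M\'esz\'aros variation-of-constants framework while sharpening it (choosing the solution to kill the homogeneous term drops the $\sup|e_\lambda|$ summand of \eqref{andrasK}), and makes the provenance of the constants transparent; the paper's envelope construction involves heavier case bookkeeping but yields strict inequalities, exhibits a whole interval of admissible initial values $x(0)$ rather than a single solution, and is the machinery that in \cite{onitsuka1, onitsuka2} leads to \emph{minimality} proofs --- relevant because minimality in A--H is precisely what the paper leaves open. Two small slips to fix in your write-up: case G lies entirely in the decaying regime $0<p<1$ and case H entirely in the growing regime $p>1$, so neither G nor H is split between the two regimes as your phrasing suggests; and you should state explicitly that the time-scales variation-of-constants formula requires only regressivity, not positive regressivity, which is what legitimizes your representation of $\phi-x$ when $1+\lambda\mu(t)<0$.
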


\begin{proof}
We proceed through the various cases.

A. If $\alpha\ge(3+2\sqrt{2})\beta$, and either $\frac{-1}{\beta}<\lambda<\lambda^-$ or $\lambda^+<\lambda<\frac{-1}{\alpha}$, then $-1<(1+\lambda\alpha)(1+\lambda\beta)<0$. Let $\varepsilon>0$ be given.
Suppose $\phi:\T\rightarrow\R$ satisfies 
\begin{equation}\label{phidelA} 
 |\phi^\Delta(t)-\lambda\phi(t)|\le \varepsilon, \quad\forall t\in\T. 
\end{equation}
We will show that if $x$ solves \eqref{maineq} with 
\begin{equation}\label{x0A}
 |\phi(0)-x(0)|<\frac{\varepsilon(\lambda+\frac{1}{\alpha}+\frac{1}{\beta})}{(\lambda-\lambda^+)(\lambda-\lambda^-)}, 
\end{equation}
then
\begin{equation}\label{xtA}
 |\phi(t)-x(t)| < \frac{\varepsilon\left|\lambda+\frac{1}{\alpha}-\frac{1}{\beta}\right|}{(\lambda-\lambda^+)(\lambda-\lambda^-)}. 
\end{equation}
Let
\begin{equation}\label{e1}
 E_1(t):=\begin{cases} (-1)^{\frac{t}{\alpha+\beta}}\left(\lambda+\frac{1}{\alpha}+\frac{1}{\beta}\right) &: t=k(\alpha+\beta), \\ 
 (-1)^{\frac{t-\alpha}{\alpha+\beta}}\left(\lambda+\frac{1}{\alpha}-\frac{1}{\beta}\right) &: t=k(\alpha+\beta)+\alpha,\end{cases}
\end{equation}
and define the functions $u$ and $v$ on $\T$ via
\[ u(t):=\left(\phi(t)+\frac{\varepsilon E_1(t)}{(\lambda-\lambda^+)(\lambda-\lambda^-)}\right)\frac{1}{e_{\lambda}(t,0)} \quad\text{and}\quad v(t):=\left(\phi(t)-\frac{\varepsilon E_1(t)}{(\lambda-\lambda^+)(\lambda-\lambda^-)}\right)\frac{1}{e_{\lambda}(t,0)} \]
for $t\in\T$ using \eqref{e1}. Then
\begin{equation}\label{phiA}
 \phi(t)=e_{\lambda}(t,0)u(t)-\frac{\varepsilon E_1(t)}{(\lambda-\lambda^+)(\lambda-\lambda^-)}=e_{\lambda}(t,0)v(t)+\frac{\varepsilon E_1(t)}{(\lambda-\lambda^+)(\lambda-\lambda^-)},
\end{equation}
and since $E_1(t)$ and $e_{\lambda}(t,0)$ have the same sign for all $t\in\T$ in this case, we have
\begin{equation}\label{uvA}
 u(t)=v(t)+\frac{2\varepsilon}{(\lambda-\lambda^+)(\lambda-\lambda^-)}\left|\frac{E_1(t)}{e_{\lambda}(t,0)}\right|, 
\end{equation}
meaning $u(t)>v(t)$ for all $t\in\T$. Additionally, for $t=k(\alpha+\beta)$ or $t=k(\alpha+\beta)+\alpha$,
\begin{equation}\label{udelA}
 u^\Delta(t) = \Big(\big(\phi^\Delta(t)-\lambda\phi(t)\big)+\varepsilon(-1)^{k+1}\Big)\frac{1}{e^\sigma_{\lambda}(t,0)}; 
\end{equation}
$(1+\lambda\alpha)<0<(1+\lambda\beta)$ in this case imply that $e^\sigma_{\lambda}(t,0)$ and $(-1)^{k+1}$ have the same sign, hence this and  \eqref{phidelA} result in the inequality
\begin{equation}\label{udelposA} 0 \le u^\Delta(t) \le \frac{2\varepsilon}{|e^\sigma_{\lambda}(t,0)|}. 
\end{equation}
Analogously
\begin{equation}\label{vdelposA} 
\frac{-2\varepsilon}{|e^\sigma_{\lambda}(t,0)|} \le v^\Delta(t) \le 0. 
\end{equation}
Thus $u$ non-decreasing and $v$ non-increasing, with $u(t)>v(t)$ means that
\begin{equation}\label{3.5A}
 v(t) \le v(0) < u(0) \le u(t), \quad \forall t\in\T. 
\end{equation}
Assume $x$ solves \eqref{maineq} and satisfies initial condition \eqref{x0A}. Clearly $x(t)=x(0)e_{\lambda}(t,0)$ for all $t\in\T$, and using the initial condition and \eqref{phiA} we have that
\[ v(0) < x(0) < u(0). \] 
If $k$ is odd and $t=k(\alpha+\beta)$, then \eqref{pabexp0}, \eqref{e1}, and \eqref{phiA} yield
\begin{eqnarray*}
 \phi(t)-x(t) & = & [u(t)- x(0)][(1+\lambda\alpha)(1+\lambda\beta)]^k - \frac{\varepsilon (-1)^{k}\left(\lambda+\frac{1}{\alpha}+\frac{1}{\beta}\right) }{(\lambda-\lambda^+)(\lambda-\lambda^-)} \\
              &\le& [x(0)-u(0)]\left|(1+\lambda\alpha)(1+\lambda\beta)\right|^k + \frac{\varepsilon\left(\lambda+\frac{1}{\alpha}+\frac{1}{\beta}\right) }{(\lambda-\lambda^+)(\lambda-\lambda^-)} \\
							& < & \frac{\varepsilon\left(\lambda+\frac{1}{\alpha}+\frac{1}{\beta}\right) }{(\lambda-\lambda^+)(\lambda-\lambda^-)};
\end{eqnarray*}
similarly,
\begin{eqnarray*}
 \phi(t)-x(t) & = & [v(t)- x(0)][(1+\lambda\alpha)(1+\lambda\beta)]^k + \frac{\varepsilon (-1)^{k}\left(\lambda+\frac{1}{\alpha}+\frac{1}{\beta}\right) }{(\lambda-\lambda^+)(\lambda-\lambda^-)} \\
              &\ge& [x(0)-v(0)]\left|(1+\lambda\alpha)(1+\lambda\beta)\right|^k - \frac{\varepsilon\left(\lambda+\frac{1}{\alpha}+\frac{1}{\beta}\right) }{(\lambda-\lambda^+)(\lambda-\lambda^-)} \\
							& > & \frac{-\varepsilon\left(\lambda+\frac{1}{\alpha}+\frac{1}{\beta}\right) }{(\lambda-\lambda^+)(\lambda-\lambda^-)}. 
\end{eqnarray*}
If $k$ is odd and $t=k(\alpha+\beta)+\alpha$, then \eqref{pabexp0}, \eqref{e1}, and \eqref{phiA} yield
\begin{eqnarray*}
 \phi(t)-x(t) & = & [u(t)- x(0)][(1+\lambda\alpha)(1+\lambda\beta)]^k(1+\lambda\alpha) - \frac{\varepsilon (-1)^{k}\left(\lambda+\frac{1}{\alpha}-\frac{1}{\beta}\right) }{(\lambda-\lambda^+)(\lambda-\lambda^-)} \\
              &\ge& [u(0)-x(0)]\left|(1+\lambda\alpha)(1+\lambda\beta)\right|^k|1+\lambda\alpha| + \frac{\varepsilon\left(\lambda+\frac{1}{\alpha}-\frac{1}{\beta}\right) }{(\lambda-\lambda^+)(\lambda-\lambda^-)} \\
							& > & \frac{\varepsilon\left(\lambda+\frac{1}{\alpha}-\frac{1}{\beta}\right) }{(\lambda-\lambda^+)(\lambda-\lambda^-)};
\end{eqnarray*}
similarly,
\begin{eqnarray*}
 \phi(t)-x(t) & = & [v(t)- x(0)][(1+\lambda\alpha)(1+\lambda\beta)]^k(1+\lambda\alpha) + \frac{\varepsilon (-1)^{k}\left(\lambda+\frac{1}{\alpha}-\frac{1}{\beta}\right) }{(\lambda-\lambda^+)(\lambda-\lambda^-)} \\
              &\le& [v(0)-x(0)]\left|(1+\lambda\alpha)(1+\lambda\beta)\right|^k|1+\lambda\alpha| + \frac{\varepsilon\left|\lambda+\frac{1}{\alpha}-\frac{1}{\beta}\right|}{(\lambda-\lambda^+)(\lambda-\lambda^-)} \\
							& < & \frac{\varepsilon\left|\lambda+\frac{1}{\alpha}-\frac{1}{\beta}\right|}{(\lambda-\lambda^+)(\lambda-\lambda^-)}.
\end{eqnarray*}
If $k$ is even and $t=k(\alpha+\beta)$, then \eqref{pabexp0}, \eqref{e1}, and \eqref{phiA} yield
\begin{eqnarray*}
 \phi(t)-x(t) & = & [u(t)- x(0)]\left|(1+\lambda\alpha)(1+\lambda\beta)\right|^k - \frac{\varepsilon\left(\lambda+\frac{1}{\alpha}+\frac{1}{\beta}\right) }{(\lambda-\lambda^+)(\lambda-\lambda^-)} \\
              &\ge& [u(0)-x(0)]\left|(1+\lambda\alpha)(1+\lambda\beta)\right|^k - \frac{\varepsilon\left(\lambda+\frac{1}{\alpha}+\frac{1}{\beta}\right) }{(\lambda-\lambda^+)(\lambda-\lambda^-)} \\
							& > & \frac{-\varepsilon\left(\lambda+\frac{1}{\alpha}+\frac{1}{\beta}\right) }{(\lambda-\lambda^+)(\lambda-\lambda^-)};
\end{eqnarray*}
similarly,
\begin{eqnarray*}
 \phi(t)-x(t) & = & [v(t)- x(0)]\left|(1+\lambda\alpha)(1+\lambda\beta)\right|^k + \frac{\varepsilon\left(\lambda+\frac{1}{\alpha}+\frac{1}{\beta}\right) }{(\lambda-\lambda^+)(\lambda-\lambda^-)} \\
              &\le& [v(0)-x(0)]\left|(1+\lambda\alpha)(1+\lambda\beta)\right|^k + \frac{\varepsilon\left(\lambda+\frac{1}{\alpha}+\frac{1}{\beta}\right)}{(\lambda-\lambda^+)(\lambda-\lambda^-)} \\
							& < & \frac{\varepsilon\left(\lambda+\frac{1}{\alpha}-\frac{1}{\beta}\right)}{(\lambda-\lambda^+)(\lambda-\lambda^-)}.
\end{eqnarray*}
If $k$ is even and $t=k(\alpha+\beta)+\alpha$, then \eqref{pabexp0}, \eqref{e1}, and \eqref{phiA} yield
\begin{eqnarray*}
 \phi(t)-x(t) & = & [-u(t) + x(0)]|(1+\lambda\alpha)(1+\lambda\beta)|^k|1+\lambda\alpha| - \frac{\varepsilon\left(\lambda+\frac{1}{\alpha}-\frac{1}{\beta}\right) }{(\lambda-\lambda^+)(\lambda-\lambda^-)} \\
              &\le& [x(0)-u(0)]|(1+\lambda\alpha)(1+\lambda\beta)|^k|1+\lambda\alpha| + \frac{\varepsilon\left|\lambda+\frac{1}{\alpha}-\frac{1}{\beta}\right|}{(\lambda-\lambda^+)(\lambda-\lambda^-)} \\
							& < & \frac{\varepsilon\left|\lambda+\frac{1}{\alpha}-\frac{1}{\beta}\right|}{(\lambda-\lambda^+)(\lambda-\lambda^-)};
\end{eqnarray*}
similarly,
\begin{eqnarray*}
 \phi(t)-x(t) & = & [-v(t) + x(0)]|(1+\lambda\alpha)(1+\lambda\beta)|^k|1+\lambda\alpha| + \frac{\varepsilon\left(\lambda+\frac{1}{\alpha}-\frac{1}{\beta}\right) }{(\lambda-\lambda^+)(\lambda-\lambda^-)} \\
              &\ge& [x(0)-v(0)]|(1+\lambda\alpha)(1+\lambda\beta)|^k|1+\lambda\alpha| + \frac{\varepsilon\left(\lambda+\frac{1}{\alpha}-\frac{1}{\beta}\right)}{(\lambda-\lambda^+)(\lambda-\lambda^-)} \\
							& > & \frac{\varepsilon\left(\lambda+\frac{1}{\alpha}-\frac{1}{\beta}\right)}{(\lambda-\lambda^+)(\lambda-\lambda^-)}.
\end{eqnarray*}
All of these cases lead to the conclusion that
\[ |\phi(t)-x(t)| < \frac{\varepsilon}{(\lambda-\lambda^+)(\lambda-\lambda^-)}\begin{cases} \left(\lambda+\frac{1}{\alpha}+\frac{1}{\beta}\right) &: t=k(\alpha+\beta), \\ \left|\lambda+\frac{1}{\alpha}-\frac{1}{\beta}\right| &: t=k(\alpha+\beta)+\alpha, \end{cases} \]
for all $t\in\T$ and all $k\in\N_0$. As $\left|\lambda+\frac{1}{\alpha}-\frac{1}{\beta}\right|>\lambda+\frac{1}{\alpha}-\frac{1}{\beta}$ in case A, \eqref{xtA} holds. \hfill$\diamondsuit$

B. If $\alpha > (3+2\sqrt{2})\beta$, and $\frac{-1}{\beta} < \lambda^- < \lambda < \lambda^+ < \frac{-1}{\alpha}$, then $(1+\lambda\alpha)(1+\lambda\beta)<-1$. Assume \eqref{phidelA}, \eqref{e1}, and \eqref{phiA}. Then \eqref{uvA}, \eqref{udelA}, \eqref{udelposA}, and \eqref{vdelposA} hold, where in \eqref{uvA} we note that now in this case 
\[ (\lambda-\lambda^+)<0<(\lambda-\lambda^-), \]
so that $u(t)<v(t)$ for all $t\in\T$. Moreover,
\[ \left|\frac{E_1(t)}{e_{\lambda}(t,0)}\right| = \begin{cases} \frac{\lambda+\frac{1}{\alpha}+\frac{1}{\beta}}{|(1+\lambda\alpha)(1+\lambda\beta)|^{\frac{t}{\alpha+\beta}}} &: t=k(\alpha+\beta) \\ \frac{\left|\lambda+\frac{1}{\alpha}-\frac{1}{\beta}\right|}{|(1+\lambda\alpha)(1+\lambda\beta)|^{\frac{t-\alpha}{\alpha+\beta}}|1+\lambda\alpha|} &: t=k(\alpha+\beta)+\alpha \end{cases} \]
with $|(1+\lambda\alpha)(1+\lambda\beta)|>1$ for all $t\in\T$ implies that
\[ \lim_{t\rightarrow\infty}\left|\frac{E_1(t)}{e_{\lambda}(t,0)}\right| = 0. \] 
Consequently in this case we have for all $t\in\T$ that
\begin{equation}\label{3.5B}
 u(0)\le u(t)\le \lim_{t\rightarrow\infty}u(t)=\lim_{t\rightarrow\infty}\frac{\phi(t)}{e_{\lambda}(t,0)}=\lim_{t\rightarrow\infty}v(t)\le v(t)\le v(0).
\end{equation}
Consider the function
\[ x(t):=\left(\lim_{t\rightarrow\infty}\frac{\phi(t)}{e_{\lambda}(t,0)}\right)e_{\lambda}(t,0), \quad t\in\T, \]
which is a well-defined solution of \eqref{maineq}. This together with \eqref{phiA} and \eqref{3.5B} yields
\begin{eqnarray}
 \phi(t)-x(t) &=& \left(u(t)-\lim_{t\rightarrow\infty}u(t)\right) e_\lambda(t,0)-\frac{\varepsilon E_1(t)}{(\lambda-\lambda^+)(\lambda-\lambda^-)} \label{phixub} \\
  &=& \left(v(t)-\lim_{t\rightarrow\infty}v(t)\right) e_\lambda(t,0)+\frac{\varepsilon E_1(t)}{(\lambda-\lambda^+)(\lambda-\lambda^-)}. \label{phixvb}
\end{eqnarray}
If $k$ is odd and $t=k(\alpha+\beta)$, then using \eqref{phixub} and checking signs we have
\[ \phi(t)-x(t) > \frac{-\varepsilon\left(\lambda+\frac{1}{\alpha}+\frac{1}{\beta}\right)}{\left|(\lambda-\lambda^+)(\lambda-\lambda^-)\right|}, \]
using \eqref{phixvb} and checking signs we have
\[ \phi(t)-x(t) < \frac{\varepsilon\left(\lambda+\frac{1}{\alpha}+\frac{1}{\beta}\right)}{\left|(\lambda-\lambda^+)(\lambda-\lambda^-)\right|}. \]
If $k$ is even and $t=k(\alpha+\beta)$, then using \eqref{phixub} and checking signs we have
\[ \phi(t)-x(t) < \frac{\varepsilon\left(\lambda+\frac{1}{\alpha}+\frac{1}{\beta}\right)}{\left|(\lambda-\lambda^+)(\lambda-\lambda^-)\right|}, \]
using \eqref{phixvb} and checking signs we have
\[ \phi(t)-x(t) > \frac{-\varepsilon\left(\lambda+\frac{1}{\alpha}+\frac{1}{\beta}\right)}{\left|(\lambda-\lambda^+)(\lambda-\lambda^-)\right|}. \]
If $k$ is odd and $t=k(\alpha+\beta)+\alpha$, then using \eqref{phixub} and checking signs we have
\[ \phi(t)-x(t) < \frac{\varepsilon\left|\lambda+\frac{1}{\alpha}-\frac{1}{\beta}\right|}{\left|(\lambda-\lambda^+)(\lambda-\lambda^-)\right|}, \]
using \eqref{phixvb} and checking signs we have
\[ \phi(t)-x(t) > \frac{-\varepsilon\left|\lambda+\frac{1}{\alpha}-\frac{1}{\beta}\right|}{\left|(\lambda-\lambda^+)(\lambda-\lambda^-)\right|}. \]
If $k$ is even and $t=k(\alpha+\beta)+\alpha$, then using \eqref{phixub} and checking signs we have
\[ \phi(t)-x(t) > \frac{-\varepsilon\left|\lambda+\frac{1}{\alpha}-\frac{1}{\beta}\right|}{\left|(\lambda-\lambda^+)(\lambda-\lambda^-)\right|}, \]
using \eqref{phixvb} and checking signs we have
\[ \phi(t)-x(t) < \frac{\varepsilon\left|\lambda+\frac{1}{\alpha}-\frac{1}{\beta}\right|}{\left|(\lambda-\lambda^+)(\lambda-\lambda^-)\right|}. \]
As in case A, all of these cases lead to the conclusion that
\[ |\phi(t)-x(t)| < \varepsilon\left|\frac{\lambda+\frac{1}{\alpha}-\frac{1}{\beta}}{(\lambda-\lambda^+)(\lambda-\lambda^-)}\right| \]
for all $t\in\T$ and all $k\in\N_0$. \hfill$\diamondsuit$

C. If $\beta < \alpha < (3+2\sqrt{2})\beta$, and $\frac{-1}{\beta} < \lambda < \frac{-1}{\alpha}$, then $-1<(1+\lambda\alpha)(1+\lambda\beta)<0$. As in case A, assume \eqref{phidelA} and \eqref{e1}. Since $\lambda^+$ and $\lambda^-$ are not real in this case, but
\[ (\lambda-\lambda^-)(\lambda-\lambda^+) = \frac{2+\alpha\lambda+\beta\lambda+\alpha\beta\lambda^2}{\alpha\beta}\in\R, \]
define the functions $u$ and $v$ on $\T$ via
\[ u(t):=\left(\phi(t)+\frac{\varepsilon E_1(t)}{\left(\frac{2+\alpha\lambda+\beta\lambda+\alpha\beta\lambda^2}{\alpha\beta}\right)}\right)\frac{1}{e_{\lambda}(t,0)} \quad\text{and}\quad v(t):=\left(\phi(t)-\frac{\varepsilon E_1(t)}{\left(\frac{2+\alpha\lambda+\beta\lambda+\alpha\beta\lambda^2}{\alpha\beta}\right)}\right)\frac{1}{e_{\lambda}(t,0)} \]
for $t\in\T$ using \eqref{e1}. Then with minor modifications \eqref{phiA}, \eqref{uvA}, \eqref{udelA}, \eqref{udelposA}, and \eqref{vdelposA} hold and this case C is thus akin to case A. \hfill$\diamondsuit$

D. If $0 < \alpha \le (3-2\sqrt{2})\beta$, and $\frac{-1}{\alpha}<\lambda<\lambda^-$ or $\lambda^+<\lambda<\frac{-1}{\beta}$, then $-1<(1+\lambda\alpha)(1+\lambda\beta)<0$. Suppose $\phi:\T\rightarrow\R$ satisfies \eqref{phidelA} for all $t\in\T$.
We will show that if $x$ solves \eqref{maineq} with 
\begin{equation}\label{x0D}
 |\phi(0)-x(0)|<\frac{\varepsilon\left(\frac{1}{\alpha}-\frac{1}{\beta}-\lambda\right)}{(\lambda-\lambda^+)(\lambda-\lambda^-)}, 
\end{equation}
then
\begin{equation}\label{xtD}
 |\phi(t)-x(t)| < \frac{\varepsilon\left(\frac{1}{\alpha}-\frac{1}{\beta}-\lambda\right)}{(\lambda-\lambda^+)(\lambda-\lambda^-)}. 
\end{equation}
Let
\begin{equation}\label{e2}
 E_2(t):=\begin{cases} (-1)^{\frac{t}{\alpha+\beta}}\left(\frac{1}{\alpha}-\frac{1}{\beta}-\lambda\right) &: t=k(\alpha+\beta), \\ 
 (-1)^{\frac{t-\alpha}{\alpha+\beta}}\left(\frac{1}{\alpha}+\frac{1}{\beta}+\lambda\right) &: t=k(\alpha+\beta)+\alpha,\end{cases}
\end{equation}
and define the functions $u$ and $v$ on $\T$ via
\[ u(t):=\left(\phi(t)+\frac{\varepsilon E_2(t)}{(\lambda-\lambda^+)(\lambda-\lambda^-)}\right)\frac{1}{e_{\lambda}(t,0)} \quad\text{and}\quad v(t):=\left(\phi(t)-\frac{\varepsilon E_2(t)}{(\lambda-\lambda^+)(\lambda-\lambda^-)}\right)\frac{1}{e_{\lambda}(t,0)} \]
for $t\in\T$ using \eqref{e2}. Then
\begin{equation}\label{phiD}
 \phi(t)=e_{\lambda}(t,0)u(t)-\frac{\varepsilon E_2(t)}{(\lambda-\lambda^+)(\lambda-\lambda^-)}=e_{\lambda}(t,0)v(t)+\frac{\varepsilon E_2(t)}{(\lambda-\lambda^+)(\lambda-\lambda^-)},
\end{equation}
and since $E_2(t)$ and $e_{\lambda}(t,0)$ have the same sign for all $t\in\T$ in this case, we have
\begin{equation}\label{uvD}
 u(t)=v(t)+\frac{2\varepsilon}{(\lambda-\lambda^+)(\lambda-\lambda^-)}\left|\frac{E_2(t)}{e_{\lambda}(t,0)}\right|, 
\end{equation}
meaning $u(t)>v(t)$ for all $t\in\T$. Additionally, for $t=k(\alpha+\beta)$ or $t=k(\alpha+\beta)+\alpha$,
\begin{equation}\label{udelD}
 u^\Delta(t) = \left(\big(\phi^\Delta(t)-\lambda\phi(t)\big)+\varepsilon \begin{cases} (-1)^{k} &: t=k(\alpha+\beta) \\  (-1)^{k+1} &: t=k(\alpha+\beta)+\alpha\end{cases}\right)\frac{1}{e^\sigma_{\lambda}(t,0)}; 
\end{equation}
$(1+\lambda\beta)<0<(1+\lambda\alpha)$ in this case imply that 
\[ e^\sigma_{\lambda}(t,0) \quad\text{and} \quad \begin{cases} (-1)^{k} &: t=k(\alpha+\beta) \\  (-1)^{k+1} &: t=k(\alpha+\beta)+\alpha\end{cases} \] 
have the same sign, hence this and \eqref{phidelA} result in the inequality
\begin{equation}\label{udelposD} 0 \le u^\Delta(t) \le \frac{2\varepsilon}{|e^\sigma_{\lambda}(t,0)|}. 
\end{equation}
Analogously
\begin{equation}\label{vdelposD} 
\frac{-2\varepsilon}{|e^\sigma_{\lambda}(t,0)|} \le v^\Delta(t) \le 0. 
\end{equation}
Thus $u$ non-decreasing and $v$ non-increasing, with $u(t)>v(t)$ means that \eqref{3.5A} holds for all $t\in\T$. Assume $x$ solves \eqref{maineq} and satisfies initial condition \eqref{x0D}. Clearly $x(t)=x(0)e_{\lambda}(t,0)$ for all $t\in\T$, and using the initial condition and \eqref{phiD} we have that
\[ v(0) < x(0) < u(0). \] 
We can now proceed, as in the proof of case A, on estimating $\phi(t)-x(t)$ for the various cases as needed. This results in the conclusion that
\[ |\phi(t)-x(t)| < \frac{\varepsilon}{(\lambda-\lambda^+)(\lambda-\lambda^-)}\begin{cases} \left(\frac{1}{\alpha}-\frac{1}{\beta}-\lambda\right) &: t=k(\alpha+\beta), \\ \left(\frac{1}{\alpha}+\frac{1}{\beta}+\lambda\right) &: t=k(\alpha+\beta)+\alpha, \end{cases} \]
for all $t\in\T$ and all $k\in\N_0$. As $\left(\frac{1}{\alpha}-\frac{1}{\beta}-\lambda\right)>\left(\frac{1}{\alpha}+\frac{1}{\beta}+\lambda\right)$ in case D, \eqref{xtD} holds. \hfill$\diamondsuit$

E. If $0 < \alpha < (3-2\sqrt{2})\beta$, and $\frac{-1}{\alpha}<\lambda^-<\lambda<\lambda^+<\frac{-1}{\beta}$, then $(1+\lambda\alpha)(1+\lambda\beta)<-1$. Assume $\phi:\T\rightarrow\R$ satisfies \eqref{phidelA}. As in the proof of case D using \eqref{e2}, we write $\phi$ as in \eqref{phiD} for the same functions $u,v$, and \eqref{uvD} holds. Since $(\lambda-\lambda^+)<0<(\lambda-\lambda^-)$ and $|(1+\lambda\alpha)(1+\lambda\beta)|>1$, we see that $u(t)\le v(t)$ with $u$ non-decreasing and $v$ non-increasing, and
\[ \lim_{t\rightarrow\infty} \left|\frac{E_2(t)}{e_{\lambda}(t,0)}\right|=0. \] 
Therefore \eqref{3.5B} holds, and we can proceed as in the proof of case B, which yields
\[ |\phi(t)-x(t)| < \frac{\varepsilon\left(\frac{1}{\alpha}-\frac{1}{\beta}-\lambda\right)}{|(\lambda-\lambda^+)(\lambda-\lambda^-)|} \]
for all $t\in\T$. \hfill$\diamondsuit$

F. If $(3-2\sqrt{2})\beta < \alpha < \beta$ and $\frac{-1}{\alpha}<\lambda<\frac{-1}{\beta}$, then $-1<(1+\lambda\alpha)(1+\lambda\beta)<0$. The proof is similar to the proof of case D and is omitted.  \hfill$\diamondsuit$

G. If $\frac{-1}{\alpha}-\frac{1}{\beta} < \lambda < \min\left\{\frac{-1}{\alpha},\frac{-1}{\beta}\right\}$, then $0<(1+\lambda\alpha)(1+\lambda\beta)<1$. Assume $\phi:\T\rightarrow\R$ satisfies \eqref{phidelA}. Let
\begin{equation}\label{e3}
 E_3(t):=\begin{cases} \left(\frac{1}{\alpha}-\frac{1}{\beta}-\lambda\right) &: t=k(\alpha+\beta), \\ 
 \left(\frac{1}{\alpha}-\frac{1}{\beta}+\lambda\right) &: t=k(\alpha+\beta)+\alpha,\end{cases}
\end{equation}
and let $u,v:\T\rightarrow\R$ be given in terms of $\phi$ and $E_3$ via
\begin{equation}\label{phiG}
 \phi(t)=e_{\lambda}(t,0)u(t)+\frac{\varepsilon E_3(t)}{\lambda(\lambda+\frac{1}{\alpha}+\frac{1}{\beta})}=e_{\lambda}(t,0)v(t)-\frac{\varepsilon E_3(t)}{\lambda(\lambda+\frac{1}{\alpha}+\frac{1}{\beta})}.
\end{equation}
Since $E_3(t)$ and $e_{\lambda}(t,0)$ have the same sign for all $t\in\T$ in this case G, we have
\begin{equation}\label{uvG}
 u(t)=v(t)+\frac{2\varepsilon}{|\lambda|(\lambda+\frac{1}{\alpha}+\frac{1}{\beta})}\left|\frac{E_3(t)}{e_{\lambda}(t,0)}\right|, 
\end{equation}
meaning $u(t)>v(t)$ for all $t\in\T$. Additionally, for $t=k(\alpha+\beta)$ or $t=k(\alpha+\beta)+\alpha$,
\begin{equation}\label{udelG}
 u^\Delta(t) = \left(\big(\phi^\Delta(t)-\lambda\phi(t)\big)+\varepsilon\begin{cases} -1 &: t=k(\alpha+\beta) \\ 1 &: t=k(\alpha+\beta)+\alpha \end{cases}\right)\frac{1}{e^\sigma_{\lambda}(t,0)}; 
\end{equation}
as $e^\sigma_{\lambda}(t,0)$ and $\begin{cases} -1 &: t=k(\alpha+\beta) \\ 1 &: t=k(\alpha+\beta)+\alpha \end{cases}$ have the same sign, hence this and \eqref{phidelA} result in the inequality \eqref{udelposA}, and \eqref{3.5A} holds again. Suppose $x$ solves \eqref{maineq} with
\[ |\phi(0)-x(0)| < \frac{\varepsilon(\frac{1}{\alpha}-\frac{1}{\beta}-\lambda)}{|\lambda|\left(\lambda+\frac{1}{\alpha}+\frac{1}{\beta}\right)}. \]
Then $x(t)=x(0)e_\lambda(t,0)$, $v(0)<x(0)<u(0)$, and using \eqref{phiG} and the various cases we see that
\begin{equation}\label{phixG} 
 |\phi(t)-x(t)| < \frac{\varepsilon}{|\lambda|\left(\lambda+\frac{1}{\alpha}+\frac{1}{\beta}\right)}\begin{cases} \left(\frac{1}{\beta}-\frac{1}{\alpha}-\lambda\right) &: \beta<\alpha, \\ \left(\frac{1}{\alpha}-\frac{1}{\beta}-\lambda\right) &: \alpha<\beta, \end{cases} 
\end{equation}  
for all $t\in\T$. \hfill$\diamondsuit$

H. If $\lambda < \frac{-1}{\alpha}-\frac{1}{\beta}$, then $(1+\lambda\alpha)(1+\lambda\beta)>1$. As in the proof of case G, consider $E_3$ in \eqref{e3} and $\phi$ in \eqref{phiG}. Unlike \eqref{uvG}, however, we have in this case that
\begin{equation}\label{uvH}
 u(t)=v(t)-\frac{2\varepsilon}{|\lambda|\left|\lambda+\frac{1}{\alpha}+\frac{1}{\beta}\right|}\left|\frac{E_3(t)}{e_{\lambda}(t,0)}\right|, 
\end{equation}
meaning $u(t)<v(t)$ for all $t\in\T$, with $u$ non-decreasing and $v$ non-increasing. As $|E_3(t)/e_{\lambda}(t,0)|\rightarrow 0$ as $t\rightarrow\infty$, we see that \eqref{3.5B} holds. Proceeding in a way similar to the proofs of cases B and G, we see that \eqref{phixG} holds for all $t\in\T$. \hfill$\diamondsuit$

I. This case is proven in \cite[Theorem 3.7]{andon}. \hfill$\diamondsuit$

J. Let $e_\lambda(t,0)$ be given as in \eqref{pabexp0}. If $\lambda=0$, then $e_0(t,0)\equiv 1$. If $\lambda=\lambda^+$, 
then 
\[ e_{\lambda^+}(t,0) = \begin{cases} (-1)^{\frac{t}{\alpha+\beta}}, & t=k(\alpha+\beta), \\ 
 (-1)^{\frac{t-\alpha}{\alpha+\beta}}(1+\lambda^+\alpha), &t=k(\alpha+\beta)+\alpha, \end{cases} \] 
and if $\lambda=\lambda^-$, then 
\[ e_{\lambda^-}(t,0) = \begin{cases} (-1)^{\frac{t}{\alpha+\beta}}, & t=k(\alpha+\beta), \\ 
 (-1)^{\frac{t-\alpha}{\alpha+\beta}}(1+\lambda^-\alpha), &t=k(\alpha+\beta)+\alpha. \end{cases} \] 
Finally, if $\lambda=\frac{-1}{\alpha}-\frac{1}{\beta}$, then 
\[ e_{\left(\frac{-1}{\alpha}-\frac{1}{\beta}\right)}(t,0) = \begin{cases} 1, & t=k(\alpha+\beta), \\ 
 \frac{-\alpha}{\beta}, &t=k(\alpha+\beta)+\alpha. \end{cases} \]
Thus in each of these cases, \eqref{e-bdd} holds with appropriately chosen bounds, and so by \cite[Theorem 3.10 (ii)]{andon}, equation \eqref{maineq} does not have HUS. \hfill$\diamondsuit$

K. If $\lambda=\frac{-1}{\alpha}$ or $\lambda=-\frac{1}{\beta}$, then $1+\lambda\mu(t)=0$, so that \eqref{maineq} is not regressive. 
\end{proof}

\section{Comparison of Hyers--Ulam Constants}

In this section we will compare some of the HUS constants found in Theorem \ref{mainthm} with the constant \eqref{andrasK} from \cite{andras}.
First, let us calculate $K$ given in \eqref{andrasK} using the expression for the exponential function in \eqref{pabexp0}. In particular, we calculate the following using rules from \cite[Section 1.4]{bp}. 
If $t=k(\alpha+\beta)$, then 
\begin{eqnarray*}
 \int_{0}^{t}|e_{\lambda}(t,\sigma(s))|\Delta s &=& \int_{0}^{t}|(1+\lambda\alpha)(1+\lambda\beta)|^{\frac{t-\sigma(s)}{\alpha+\beta}}\Delta s \\
 &=& \left(\int_{0}^{\alpha} + \int_{\alpha}^{(\alpha+\beta)} +\cdots+ \int_{(k-1)(\alpha+\beta)+\alpha}^{k(\alpha+\beta)}\right) |(1+\lambda\alpha)(1+\lambda\beta)|^{\frac{t-\sigma(s)}{\alpha+\beta}}\Delta s \\
 &=& \alpha|(1+\lambda\alpha)(1+\lambda\beta)|^{\frac{t-\alpha}{\alpha+\beta}} 
     +\beta|(1+\lambda\alpha)(1+\lambda\beta)|^{\frac{t-(\alpha+\beta)}{\alpha+\beta}} \\
 & & +\alpha|(1+\lambda\alpha)(1+\lambda\beta)|^{\frac{t-(\alpha+\beta)-\alpha}{\alpha+\beta}}+\cdots
		 +\beta|(1+\lambda\alpha)(1+\lambda\beta)|^{\frac{t-k(\alpha+\beta)}{\alpha+\beta}} \\
 &=& \alpha|(1+\lambda\alpha)(1+\lambda\beta)|^{\frac{t-\alpha}{\alpha+\beta}}\sum_{j=0}^{k-1}|(1+\lambda\alpha)(1+\lambda\beta)|^{-j} \\
 & & +\beta|(1+\lambda\alpha)(1+\lambda\beta)|^{\frac{t-(\alpha+\beta)}{\alpha+\beta}}\sum_{j=0}^{k-1}|(1+\lambda\alpha)(1+\lambda\beta)|^{-j} \\
 &=& \frac{|(1+\lambda\alpha)(1+\lambda\beta)|^k-1}{|(1+\lambda\alpha)(1+\lambda\beta)|-1}\left(\alpha|(1+\lambda\alpha)(1+\lambda\beta)|^{\frac{\beta}{\alpha+\beta}}+\beta\right).
\end{eqnarray*}
If $t=k(\alpha+\beta)+\alpha$, then 
\begin{eqnarray*}
 \int_{0}^{t}|e_{\lambda}(t,\sigma(s))|\Delta s 
 &=& \left(\int_{0}^{\alpha} + \int_{\alpha}^{(\alpha+\beta)} +\cdots+ \int_{k(\alpha+\beta)}^{k(\alpha+\beta)+\alpha}\right) |(1+\lambda\alpha)(1+\lambda\beta)|^{\frac{t-\sigma(s)}{\alpha+\beta}}\Delta s \\
 &=& \alpha|(1+\lambda\alpha)(1+\lambda\beta)|^{\frac{t-\alpha}{\alpha+\beta}} 
     +\beta|(1+\lambda\alpha)(1+\lambda\beta)|^{\frac{t-(\alpha+\beta)}{\alpha+\beta}} \\
 & & +\cdots+\alpha|(1+\lambda\alpha)(1+\lambda\beta)|^{\frac{t-k(\alpha+\beta)-\alpha}{\alpha+\beta}} \\
 &=& \alpha|(1+\lambda\alpha)(1+\lambda\beta)|^{\frac{t-\alpha}{\alpha+\beta}}\sum_{j=0}^{k}|(1+\lambda\alpha)(1+\lambda\beta)|^{-j} \\
 & & +\beta|(1+\lambda\alpha)(1+\lambda\beta)|^{\frac{t-(\alpha+\beta)}{\alpha+\beta}}\sum_{j=0}^{k-1}|(1+\lambda\alpha)(1+\lambda\beta)|^{-j} \\
 &=& \frac{\beta|(1+\lambda\alpha)(1+\lambda\beta)|^{\frac{\alpha}{\alpha+\beta}}(|(1+\lambda\alpha)(1+\lambda\beta)|^k-1)+\alpha(|(1+\lambda\alpha)(1+\lambda\beta)|^{k+1}-1)}{|(1+\lambda\alpha)(1+\lambda\beta)|-1}.
\end{eqnarray*}
Suppose $0<|(1+\lambda\alpha)(1+\lambda\beta)|<1$. Then 
\[ \lim_{k\rightarrow\infty} |(1+\lambda\alpha)(1+\lambda\beta)|^k=0, \]
and we can summarize the above as
\[ \lim_{t\rightarrow\infty}\int_{0}^{t}|e_{\lambda}(t,\sigma(s))|\Delta s = \begin{cases} \frac{\beta+\alpha|(1+\lambda\alpha)(1+\lambda\beta)|^{\frac{\beta}{\alpha+\beta}}}{1-|(1+\lambda\alpha)(1+\lambda\beta)|} &: t=k(\alpha+\beta), \\
 &  \\
\frac{\alpha+\beta|(1+\lambda\alpha)(1+\lambda\beta)|^{\frac{\alpha}{\alpha+\beta}}}{1-|(1+\lambda\alpha)(1+\lambda\beta)|} &: t=k(\alpha+\beta)+\alpha. \end{cases} \]
In light of this fact, for comparison's sake the HUS constant $K$ in \eqref{andrasK} is
\begin{equation}\label{roughK}
 K=\begin{cases} \sup|(1+\lambda\alpha)(1+\lambda\beta)|^{\frac{t}{\alpha+\beta}} + \frac{\beta+\alpha|(1+\lambda\alpha)(1+\lambda\beta)|^{\frac{\beta}{\alpha+\beta}}}{1-|(1+\lambda\alpha)(1+\lambda\beta)|} &: t=k(\alpha+\beta), \\
 &  \\
\sup|(1+\lambda\alpha)(1+\lambda\beta)|^{\frac{t-\alpha}{\alpha+\beta}}|1+\lambda\alpha| + \frac{\alpha+\beta|(1+\lambda\alpha)(1+\lambda\beta)|^{\frac{\alpha}{\alpha+\beta}}}{1-|(1+\lambda\alpha)(1+\lambda\beta)|} &: t=k(\alpha+\beta)+\alpha. \end{cases}
\end{equation}

\begin{example}
We will compare the HUS constant from Theorem \ref{mainthm} case A, with the HUS constant $K$ in \eqref{roughK}. Let $\alpha=6$, $\beta=1$, and $\lambda=-1/5$. Then $\lambda^+=-1/2$ and $\lambda^-=-2/3$, and 
\[ \frac{-1}{2} < \frac{-1}{5} < \frac{-1}{6} \quad\implies\quad \lambda^+ < \lambda < \frac{-1}{\alpha}. \]
Then the HUS constant from case A in Theorem \ref{mainthm} is
\[ \frac{|\lambda+\frac{1}{\alpha}-\frac{1}{\beta}|}{(\lambda-\lambda^+)(\lambda-\lambda^-)} = \frac{155}{21}\approx 7.38, \]
while $K$ in \eqref{roughK} is
\[ K=\begin{cases} 1 + 1+\frac{25}{21}\left(1+6\left(\frac{2}{5}\right)^{2/7}\right)\approx 7.688 &: t=k(\alpha+\beta), \\
 &  \\
1/5 + 1+\frac{25}{21}\left(6+\left(\frac{2}{5}\right)^{12/7}\right)\approx 7.59 &: t=k(\alpha+\beta)+\alpha, \end{cases} \]
so the HUS constant in Theorem \ref{maineq} A is better. However, if $\lambda=-4/5$, then 
\[ \frac{-1}{\beta} < \lambda < \lambda^- \]
in case A, and we have
\[ 40.8333 \quad\text{versus}\quad K=\begin{cases} 29.2055 \\ 32.0933, \end{cases} \]
so that $K$ in \eqref{roughK} from \cite{andras} is better.
\end{example}

\begin{example}
We will compare the HUS constant from Theorem \ref{mainthm} case C, with the HUS constant $K$ in \eqref{roughK}. Let $\alpha=3$, $\beta=1$, and $\lambda=-1/2$. Then
\[ -1 < \frac{-1}{2} < \frac{-1}{3} \quad\implies\quad \frac{-1}{\beta} < \lambda < \frac{-1}{\alpha}, \]
and the HUS constant from Theorem \ref{mainthm} C is
\[ \frac{14}{3}\approx 4.66, \]
while $K$ in \eqref{roughK} is
\[ K=\begin{cases} 1 + \frac{4}{3} \left(1 + \frac{3}{\sqrt{2}}\right) \approx 5.16 &: t=k(\alpha+\beta), \\
 &  \\
\frac{1}{2} + \frac{4}{3} \left(3 + \frac{1}{2 \sqrt{2}}\right)\approx 4.97 &: t=k(\alpha+\beta)+\alpha, \end{cases} \]
so the HUS constant in Theorem \ref{maineq} C is better. However, if $\lambda=-4/5$, then we have
\[ 6.111 \quad\text{versus}\quad K=\begin{cases} 5.42 \\ 6.101. \end{cases} \]
\end{example}

\begin{example}
We will compare the HUS constant from Theorem \ref{mainthm} D, with the HUS constant $K$ in \eqref{roughK}. Let $\alpha=1/10$, $\beta=1$, and $\lambda=-1.2$. Then we have
\[ 1.238 \quad\text{versus}\quad K=\begin{cases} 2.238 \\ 2.037, \end{cases} \]
so the HUS constant in D is smaller. However, if $\lambda=-9.2$, then we have
\[ 5.29 \quad\text{versus}\quad K=\begin{cases} 4.10\\ 3.168. \end{cases} \]
\end{example}

\begin{example}
We will compare the HUS constant from Theorem \ref{mainthm} G, with the HUS constant $K$ in \eqref{roughK}. 
Let $\alpha=1$, $\beta=1/2$, and $\lambda=-2.5$. Then we have
\[ 2.8 \quad\text{versus}\quad K=\begin{cases} 2.95 \\ 3.52, \end{cases} \]
but if $\lambda=-2.9$, then it is 
\[ 13.45 \quad\text{versus}\quad K=\begin{cases} 10.99 \\ 11.9. \end{cases} \]
\end{example}

As we can observe following these examples, it remains an open question as to how to find the minimum HUS constant, given that \eqref{maineq} is HUS and $1+\lambda\mu(t)<0$, even for this relatively straightforward time scale, much less for arbitrary time scales.
 


\end{document}